\documentclass[a4paper,11pt]{amsart}

\usepackage[margin=1in]{geometry}
\usepackage{amsmath,amssymb,amsthm,mathtools}
\usepackage{microtype}
\usepackage[hidelinks]{hyperref}

\numberwithin{equation}{section}

\newtheorem{theorem}{Theorem}[section]
\newtheorem{proposition}[theorem]{Proposition}
\newtheorem{lemma}[theorem]{Lemma}
\newtheorem{corollary}[theorem]{Corollary}
\theoremstyle{definition}
\newtheorem{definition}[theorem]{Definition}
\theoremstyle{remark}
\newtheorem{remark}[theorem]{Remark}

\newcommand{\1}{\mathbf 1}
\newcommand{\cF}{\mathcal F}
\newcommand{\cG}{\mathcal G}

\newcommand{\wtLambda}{\widetilde\Lambda}

\newcommand{\lcmop}{\operatorname{lcm}}

\title{The Exponent of Harmonic LCM Avoidance}
\author{Yanping Luo$^{1}$}
\address{$^{1}$Sichuan University, Chengdu, China}
\email{2024222010045@stu.scu.edu.cn}

\author{Ruiyi Yang$^{2}$}
\address{$^{2}$Fudan University, Shanghai, China}
\email{2330709018@m.fudan.edu.cn}

\author{Keheng Zhu$^{3}$}
\address{$^{3}$Capital Normal University, Beijing, China}
\email{2240502168@cnu.edu.cn}

\subjclass[2020]{11B75, 05D05, 11N25}
\keywords{least common multiple, harmonic sum, sunflower, weighted capacity, extremal set theory}

\begin{document}

\begin{abstract}
Fix $k\ge 3$, and let $f_k(N)$ be the largest harmonic sum of a subset of $[N]$ containing no $k$ distinct integers with a common pairwise least common multiple.  We prove that $f_k(N)=(\log N)^{\gamma_k+o(1)}$ for a well-defined exponent $\gamma_k\in(0,1]$.  Following the weighted-pressure idea of Chojecki, we give a self-contained proof of variational formulas for $\gamma_k$ in terms of weighted sunflower-free families.  We then eliminate the continuous weight: if $M_k(n,r)$ is the largest size of an $r$-uniform $k$-cosunflower-free family on $[n]$, then
\[
\gamma_k=\sup_{n\ge1,\ 1\le r\le n}\frac{r}{en}M_k(n,r)^{1/r}.
\]
This finite-block formula yields a direct transfer principle from uniform set-system constructions, recovers the Tang--Zhang bounds, and gives $0.438899\ldots<\gamma_3\le0.889881\ldots$.
\end{abstract}

\maketitle

\section{Introduction and main results}

Write $[N]=\{1,\dots,N\}$.  A set $A\subseteq[N]$ is called \emph{LCM-$k$-free} if it contains no distinct $a_1,\dots,a_k$ for which $\lcmop(a_i,a_j)$ is independent of the pair $1\le i<j\le k$.  Following Erd\H{o}s~\cite{Erdos1970}, define
\[
f_k(N):=\max\left\{\sum_{a\in A}\frac1a:A\subseteq[N]\text{ is LCM-$k$-free}\right\}.
\]
Erd\H{o}s proved $f_k(N)\ll_k\log N/\log\log N$.  Tang and Zhang~\cite{TangZhang2025} subsequently connected this problem to the Erd\H{o}s--Szemer\'edi weak sunflower problem and established nontrivial polylogarithmic upper and lower bounds.  In particular, if $\mu_k^{\mathrm S}$ denotes the exponential capacity of $k$-sunflower-free subfamilies of $2^{[n]}$, then
\begin{equation}\label{eq:TZ-bounds}
 (\log N)^{\log\mu_k^{\mathrm S}-o(1)}\le f_k(N)
 \ll (\log N)^{\mu_k^{\mathrm S}-1+o(1)}.
\end{equation}
A note of Chojecki~\cite{Chojecki2026}, posted in the discussion of Erd\H{o}s Problem~\#856~\cite{Bloom856}, introduced a weighted sunflower pressure and observed that the logarithmic exponent itself exists.  We give a detailed proof of that assertion and derive a finite uniform-block characterization which is not present in the cited formulations.

Recall that distinct sets $S_1,\dots,S_k$ form a \emph{$k$-sunflower} if their pairwise intersections are all equal, and a \emph{$k$-cosunflower} if their pairwise unions are all equal.  Let
\[
M_k(n,r):=\max\left\{|\cG|:\cG\subseteq\binom{[n]}r\text{ is $k$-cosunflower-free}\right\}.
\]
The following theorem is the main conclusion.

\begin{theorem}[Exact exponent and finite-block formula]\label{thm:main}
For every fixed integer $k\ge3$, the limit
\[
\gamma_k:=\lim_{N\to\infty}\frac{\log f_k(N)}{\log\log N}
\]
exists.  Equivalently,
\[
f_k(N)=(\log N)^{\gamma_k+o(1)}.
\]
Moreover,
\begin{equation}\label{eq:finite-block}
\displaystyle
\gamma_k=\sup_{\substack{n\ge1\\1\le r\le n}}
\frac{r}{en}M_k(n,r)^{1/r}.
\end{equation}
In particular, the optimal logarithmic exponent is the supremum of explicitly finite extremal quantities.
\end{theorem}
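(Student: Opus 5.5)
We prove the two inequalities $\liminf \log f_k(N)/\log\log N \ge \gamma_k^\ast$ and $\limsup \le \gamma_k^\ast$ separately, where $\gamma_k^\ast$ denotes the right-hand side of \eqref{eq:finite-block}. Together they give both the existence of the limit and the formula.

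The dictionary is the standard one. Write $a=\prod p^{e_p}$. For squarefree integers, $\lcmop(a_i,a_j)$ corresponds to $S_i\cup S_j$, where $S_i$ is the set of primes dividing $a_i$, so LCM-$k$-freeness of a squarefree set is exactly $k$-cosunflower-freeness of the corresponding family of prime sets.

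\emph{Lower bound.} Fix $n,r$ and an extremal $\cG\subseteq\binom{[n]}{r}$ that is $k$-cosunflower-free. Partition the primes in $(y,x]$, with $x=\log N$ essentially, into $n$ blocks $P_1,\dots,P_n$ of roughly equal reciprocal mass $\sum_{p\in P_i}1/p\approx \frac1n\log\log x$ (scales chosen so that products of $\le m$ primes stay below $N$). Apply a product construction: split the primes into $m$ independent ``rounds''. In each round choose $G\in\cG$ and one prime from each block $P_i$, $i\in G$. Take $A$ to be all products over the rounds.

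Two facts need checking. First, a cosunflower in $A$ projects to a cosunflower (or to all-equal sets) in each coordinate. If all $k$ elements agree on the block pattern in a round, then distinct primes inside blocks give a $k$-cosunflower of primes within a block, which is impossible for singletons when $k\ge3$ and the sets are distinct. One then needs a tensor-power lemma: the product of cosunflower-free families is cosunflower-free, via the standard coordinate argument that some coordinate carries a genuine $k$-cosunflower. Second, the harmonic sum is roughly
\[
\Bigl(|\cG|\,\bigl(\tfrac{L}{n}\bigr)^{r}/r!\Bigr)^{\text{rounds}},\qquad L=\log\log x .
\]
Optimising the number of primes used, via Stirling on $r!$ over many rounds and a total prime count $\approx tL$ with $t$ free, produces the factor $\bigl(\frac{r}{en}\bigr)M^{1/r}$ as the exponent of $\log N$. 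This is the usual $\max_t (cL)^{t}/t!\approx e^{cL}$ computation with $c=\frac{r}{en}M^{1/r}\cdot e$ normalised appropriately. Taking the supremum over $n,r$ gives $\liminf\ge\gamma_k^\ast$.

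\emph{Upper bound.} This is the main obstacle. First reduce to squarefree integers with all prime factors below a threshold, via Erd\H{o}s-type arguments: the non-squarefree part and the large primes contribute negligible multiplicative factors $(\log N)^{o(1)}$ after grouping by the non-squarefree/smooth part. Then bound $\sum_{a\in A}1/a$ for a cosunflower-free family of prime sets.

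The plan is to bucket the primes dyadically in $\log\log p$ into $n$ blocks of equal mass $L/n$, with $n$ large but fixed. Each $a$ gets a profile consisting of how many primes it has in each block. Within a fixed profile with $r_i$ primes in block $i$, one needs a weighted version of the Erd\H{o}s–Ko–Rado/shadow bound. The harmonic weight of a cosunflower-free family whose block-types are restricted is at most roughly $M$-type counts times $\prod (L/n)^{r_i}/r_i!$. This is where the weighted pressure of Chojecki enters: define $\Lambda(\lambda)=\lim \frac1n\log\max_{\cF}\sum_{S\in\cF}\lambda^{|S|}$ over cosunflower-free $\cF\subseteq 2^{[n]}$, which exists by supermultiplicativity (Fekete), and show $f_k(N)\le(\log N)^{\inf_\lambda(\ldots)+o(1)}$ by a Rankin/Chernoff trick. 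Here each prime $p$ gets weight $\lambda/p$, and the problem becomes a sunflower-type count on the $n$ blocks after a random-restriction step choosing one representative prime pattern per block.

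Finally, to eliminate the continuous weight, a Legendre-transform argument shows that $\inf_\lambda$ of the pressure expression equals $\sup_{n,r}\frac{r}{en}M_k(n,r)^{1/r}$. The $\lambda$-weighted maximum is dominated by a single layer $r$ up to polynomial factors, so $\max_\cF\sum\lambda^{|S|}\approx\max_r M_k(n,r)\lambda^r$. Optimising over $\lambda$ with the $e$ from $\sum x^t/t!$ then matches the finite-block quantity exactly. I expect the weighted counting in the upper bound, namely making the block-level cosunflower-freeness survive the loss of information inside blocks, to be the hardest step. The first thing I would try is averaging over random choices of one prime per block, to transfer the problem to a genuinely set-theoretic family on $[n]$.
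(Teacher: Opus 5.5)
\textbf{Lower bound.} This half is in substance the paper's construction: a blow-up over disjoint prime buckets of the block power of a uniform cosunflower-free $\cG\subseteq\binom{[n]}r$, with bucket mass optimized at $z=eM^{-1/r}$. The bookkeeping needs repair. Primes must run up to $N^{1/(rm)}$, not $\log N$. The $nm$ buckets are disjoint across rounds, each of mass about $z$, so a round contributes $Mz^r$ (no $r!$). Your product lemma also needs uniformity: $\{\varnothing,\{1\}\}$ and $\{\varnothing,\{2\}\}$ are $3$-cosunflower-free, but their product contains $\{1\},\{2\},\{1,2\}$. That is why the existence of the pressure goes through the heaviest layer rather than plain Fekete.

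\textbf{Upper bound: the genuine gap.} With a \emph{fixed} number $n$ of blocks of mass $L/n\to\infty$, cosunflower-freeness does not descend to the blocks. Many members share a profile, and the sets of hit blocks need not be cosunflower-free. So your within-profile bound ``$M$-type counts times $\prod(L/n)^{r_i}/r_i!$'' is just the original problem restated. Indeed, any transfer to a weighted family on $[n]$ with block weights $Z\approx e^{L/n}$ yields at best $C_k(n;Z)\ge Z^n\approx\log N$, which is trivial. Moreover, the functional under $\inf_\lambda$ is never specified, and the ``Legendre transform'' step conflates an infimum of upper bounds with a supremum of lower bounds. The lower-bound functional $v^{-1}\log\wtLambda_k(v)$ tends to $0$ as $v\to\infty$, so it cannot itself be an upper bound for all $v$. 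One needs a \emph{different} upper-bound functional that converges to the lower-bound one in a limit, and proving that is the whole point.

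\textbf{How the paper closes it.} Multiply $\sum_{a\in A}1/a$ by $H_z(N)\asymp(\log N)^z$ and set $m=aq$. Then $\{S\subseteq P(m):m/\prod_{p\in S}p\in A\}$ is $k$-\emph{sunflower}-free: dividing out primes turns lcm's into intersections, and non-squarefree $a$ cause no trouble. Hence $\beta_k\le\Lambda_k(z)-z$. Via $\wtLambda_k(1/z)=\Lambda_k(z)/z$, the lower bound reads $\alpha_k\ge z\log(1+D_k(z)/z)=D_k(z)+O(1/z)$, and letting $z\to\infty$ forces $\alpha_k=\beta_k$. Only then do the layer decomposition and the calculus in $z$ give \eqref{eq:finite-block}.

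\textbf{Repairing your random-restriction idea.} It also works, but with the opposite scaling.
First group by the powerful part and by the primes below a fixed $y_0>u^{-2}$.
Then cut the remaining primes into about $\log\log N/u$ blocks $P_i$, each of mass at most $u+u^2$.
In each block draw a nonempty pattern $Q_i\subseteq P_i$ with probability $\prod_{p\in Q_i}p^{-1}/Z_i$, where $Z_i=\prod_{p\in P_i}(1+1/p)-1$.
Members $S$ with $S\cap P_i\in\{\varnothing,Q_i\}$ for all $i$ have distinct, cosunflower-free block patterns, and
$\prod_{p\in S}p^{-1}=\Pr(S\text{ compatible})\prod_{i:\,S\cap P_i\neq\varnothing}Z_i$.
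Hence $\beta_k\le u^{-1}\log\wtLambda_k(e^{u+u^2}-1)$, and letting $u\to0$ gives $\beta_k\le\sup_{v>0}v^{-1}\log\wtLambda_k(v)\le\alpha_k$.
As written, though, the proposal contains neither this argument nor the paper's, so the upper half of the theorem is unproved.
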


The finite-block formula gives both structural and numerical consequences.  Let $F_k(n)$ be the largest cardinality of a $k$-sunflower-free family in $2^{[n]}$ and put
\[
\mu_k^{\mathrm S}:=\lim_{n\to\infty}F_k(n)^{1/n};
\]
the existence of this limit will follow from the same tensor argument used below.

\begin{corollary}[Comparison with ordinary sunflower capacity]\label{cor:capacity}
For every $k\ge3$,
\begin{equation}\label{eq:capacity-comparison}
\max\left\{\frac{k-2}{e((k-2)!)^{1/(k-2)}},\ \log\mu_k^{\mathrm S}\right\}
\le \gamma_k\le \mu_k^{\mathrm S}-1\le1.
\end{equation}
Consequently,
\[
0.438899884\ldots<\gamma_3\le0.889881574\ldots.
\]
Furthermore, $\gamma_k$ is nondecreasing in $k$ and $\gamma_k\to1$ as $k\to\infty$; more precisely, with $r=k-2$,
\begin{equation}\label{eq:large-k}
\gamma_k\ge \frac{r}{e(r!)^{1/r}}
=1-\frac{\log(2\pi r)}{2r}+O\!\left(\frac{(\log r)^2}{r^2}\right).
\end{equation}
\end{corollary}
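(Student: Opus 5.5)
The plan is to read everything off the finite-block formula \eqref{eq:finite-block} of Theorem~\ref{thm:main}, together with the Tang--Zhang bounds \eqref{eq:TZ-bounds} and the fact that $\gamma_k$ is the exact exponent.

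\emph{Upper bound $\gamma_k\le\mu_k^{\mathrm S}-1$.} Since $f_k(N)=(\log N)^{\gamma_k+o(1)}$, this is immediate from the upper half of \eqref{eq:TZ-bounds}. The remaining inequality $\mu_k^{\mathrm S}\le 2$ is trivial because $F_k(n)\le 2^n$. Alternatively, staying inside the finite-block formula: complementation turns cosunflowers into sunflowers, so $M_k(n,r)\le\binom nr$-constrained sunflower-free counts. Using $\sum_r M_k(n,r)x^r\le\sum_r |\cG_r|x^r$, a weighted count of sunflower-free families is at most $F_k(n)$ times a factor, and optimizing the bound $\frac{r}{en}M^{1/r}$ over $x$ gives the same $\mu-1$. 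I will simply invoke \eqref{eq:TZ-bounds}.

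\emph{Lower bounds.} The bound $\log\mu_k^{\mathrm S}\le\gamma_k$ is likewise the lower half of \eqref{eq:TZ-bounds}. For the explicit bound I take $n=r=k-2$? That fails, since $M=1$ gives $\frac1e$. Instead I take $r=k-2$ and let $n\to\infty$ with $\cG=\binom{[n]}{r}$. Suppose $k$ distinct $r$-sets have all pairwise unions equal to $U$. Then every element of $U$ is missing from at most one of the sets, so $|U|\le r+ \lfloor (k\cdot\text{missing})\rfloor$. More precisely, each set misses $|U|-r$ elements of $U$, and these missing parts are disjoint, so $k(|U|-r)\le|U|$, i.e.\ $|U|\le kr/(k-1)<r+2$ when $r=k-2$. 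Hence $|U|=r+1$, which means the sets are $r$-subsets of an $(r+1)$-set missing distinct points; but $|U|-r=1$ and $k\le|U|=r+1=k-1$ is a contradiction. (The case $|U|=r$ forces all the sets to be equal, which is excluded.) So $\binom{[n]}{k-2}$ is cosunflower-free, and the formula gives $\gamma_k\ge \frac{r}{en}\binom nr^{1/r}\to\frac{r}{e(r!)^{1/r}}$ as $n\to\infty$. Stirling then yields \eqref{eq:large-k}: we have $(r!)^{1/r}=\frac re(2\pi r)^{1/(2r)}(1+O(1/r^2))$, so the bound equals $\exp(-\frac{\log(2\pi r)}{2r}+O(r^{-2}))$, and expanding the exponential gives the error $O((\log r)^2/r^2)$.

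\emph{Monotonicity and numerics.} A $k$-cosunflower-free family is $(k+1)$-cosunflower-free, because any $k$ members of a $(k+1)$-cosunflower already form a $k$-cosunflower. Hence $M_k\le M_{k+1}$, and $\gamma_k$ is nondecreasing. Combined with $\gamma_k\le1$ and \eqref{eq:large-k}, this gives $\gamma_k\to1$. For $k=3$ the bound $1/e\approx0.3679$ is weaker than the claimed $0.4389$, so the numerical lower bound must come from $\log\mu_3^{\mathrm S}$ with a known construction $\mu_3^{\mathrm S}>e^{0.438899}\approx1.5509$ (the best known weak-sunflower-free capacity). The upper value $0.889881\ldots$ is $\mu_3^{\mathrm S}-1$ with a known upper bound $\mu_3^{\mathrm S}\le1.889881\ldots$. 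The main obstacle is locating and citing these numerical capacity bounds correctly (the slice-rank bound of Naslund--Sawin), and checking the strict inequality in the lower bound; the structural parts are routine.
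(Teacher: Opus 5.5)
Your argument is correct, and its explicit part (the complete layer $\binom{[n]}{k-2}$ via disjoint deficiency sets, $n\to\infty$, then Stirling) is exactly the paper's. Incidentally, your inequality $k(|U|-r)\le|U|$ already gives $|U|\le k(k-2)/(k-1)<r+1$, so the separate case $|U|=r+1$ is unnecessary.

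The genuine difference is in $\log\mu_k^{\mathrm S}\le\gamma_k\le\mu_k^{\mathrm S}-1$. You import both halves of \eqref{eq:TZ-bounds} from Tang--Zhang as a black box. The paper instead derives them internally by evaluating its variational formulas at $z=1$: $\gamma_k=\inf_{z>0}(\Lambda_k(z)-z)\le\Lambda_k(1)-1$ and $\gamma_k=\sup_{u>0}\log\wtLambda_k(u)/u\ge\log\wtLambda_k(1)$, combined with $\Lambda_k(1)=\wtLambda_k(1)=\mu_k^{\mathrm S}$ from Proposition~\ref{prop:pressure}. Your route is shorter and logically valid given the citation. However, it is circular with respect to the paper's aim of \emph{recovering} the Tang--Zhang bounds, and it hides that they are simply the $z=1$ specialization of the pressure formulas.

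The alternative you sketch ``inside the finite-block formula'' does not work as written. Feeding only $M_k(n,r)\le\min\{\binom nr,F_k(n)\}$ into \eqref{eq:finite-block} gives just $\gamma_k\le1$, because the regime $r/n\to0$ is controlled only by $\binom nr$. In the paper, the bound $\mu_k^{\mathrm S}-1$ comes from the arithmetic upper bound of Theorem~\ref{thm:upper} via $\inf_z(\Lambda_k(z)-z)$, so you were right to drop the sketch.

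For monotonicity you use $M_k\le M_{k+1}$ in \eqref{eq:finite-block}, while the paper simply notes $f_k\le f_{k+1}$ because LCM-$k$-free sets are LCM-$(k+1)$-free; both are fine.

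For the numerics, the precise inputs are $\mu_3^{\mathrm S}>1.551$ (Deuber--Erd\H{o}s--Gunderson--Kostochka--Meyer), with $\log1.551=0.438899884\ldots$, and $\mu_3^{\mathrm S}\le3/2^{2/3}$ (Naslund--Sawin), with $3/2^{2/3}-1=0.889881574\ldots$. You identified the right kind of sources but only back-solved the decimals.
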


The value of $\gamma_k$ is not presently known numerically, even for $k=3$.  Thus Theorem~\ref{thm:main} settles the existence and exact characterization of the polylogarithmic exponent, while its evaluation remains a finite-uniform sunflower problem.

\section{Weighted sunflower pressures}

For $z>0$ and $n\ge0$ (with the convention $[0]=\varnothing$), define the weighted extremal partition functions
\begin{align*}
W_k(n;z)&:=\max\left\{\sum_{S\in\cF}z^{|S|}:\cF\subseteq2^{[n]}\text{ is $k$-sunflower-free}\right\},\\
C_k(n;z)&:=\max\left\{\sum_{S\in\cG}z^{|S|}:\cG\subseteq2^{[n]}\text{ is $k$-cosunflower-free}\right\}.
\end{align*}
We first record the tensor mechanism which makes exponential capacities well defined.

\begin{lemma}[Uniform tensor products]\label{lem:tensor}
Let $\mathcal U\subseteq\binom{[n]}r$ be $k$-sunflower-free, and place copies $\mathcal U^{(1)},\dots,\mathcal U^{(t)}$ on pairwise disjoint $n$-element blocks.  Then
\[
\mathcal U^{\boxtimes t}:=\{U_1\sqcup\cdots\sqcup U_t:U_j\in\mathcal U^{(j)}\}
\]
is $k$-sunflower-free.  The analogous assertion holds with ``sunflower'' replaced by ``cosunflower.''
\end{lemma}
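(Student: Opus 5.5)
We argue directly from the definitions. Suppose $S_1,\dots,S_k$ are distinct members of $\mathcal U^{\boxtimes t}$ forming a $k$-sunflower. Write $S_i=U_{i,1}\sqcup\cdots\sqcup U_{i,t}$, where $U_{i,j}\in\mathcal U^{(j)}$ lives on the $j$-th block $B_j$. Because the blocks are pairwise disjoint, intersection commutes with restriction to a block: for every $j$ and every $i\ne i'$,
\[
(S_i\cap S_{i'})\cap B_j=U_{i,j}\cap U_{i',j}.
\]
So if all pairwise intersections $S_i\cap S_{i'}$ equal a common set $Y$, then in each block the traces $U_{1,j},\dots,U_{k,j}$ have all pairwise intersections equal to $Y\cap B_j$. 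The same computation with unions shows that a $k$-cosunflower in the product restricts to families with constant pairwise unions in every block.

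The key step is to find a single block $j$ in which the $k$ traces $U_{1,j},\dots,U_{k,j}$ are pairwise distinct. There they form a genuine $k$-sunflower (respectively $k$-cosunflower) in a copy of $\mathcal U$, which contradicts the hypothesis. The $S_i$ are distinct, so for each pair $i\ne i'$ some block separates $U_{i,j}\ne U_{i',j}$. We still need one block that separates all pairs at once, and this is where uniformity enters.

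Fix a block $j$ and suppose $U_{i,j}=U_{i',j}$ for some $i\ne i'$. Then in the sunflower case
\[
Y\cap B_j=U_{i,j}\cap U_{i',j}=U_{i,j}.
\]
Every other trace $U_{m,j}$ satisfies $U_{m,j}\cap U_{i,j}=Y\cap B_j=U_{i,j}$, so $U_{i,j}\subseteq U_{m,j}$. Both sets have size $r$, hence $U_{m,j}=U_{i,j}$. Thus in each block the traces are either all equal or pairwise distinct. In the first case they are pairwise distinct provided they are not all equal.

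The cosunflower case is dual. If $U_{i,j}=U_{i',j}$, then the common union in block $j$ is $U_{i,j}$. Every other trace then satisfies $U_{m,j}\subseteq U_{i,j}$, and equal cardinality forces $U_{m,j}=U_{i,j}$.

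Now, if every block had all $k$ traces equal, then $S_1=\cdots=S_k$, contradicting distinctness. So some block $j$ has pairwise distinct traces, and these form a $k$-sunflower (or $k$-cosunflower) in $\mathcal U^{(j)}\cong\mathcal U$. This contradiction proves both assertions.

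The only delicate point is the all-equal-or-all-distinct dichotomy within a block. It relies on $r$-uniformity: without it, a block could contain a partially coincident configuration, and the argument would fail. This is why the lemma is stated for uniform families.
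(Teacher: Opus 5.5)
Your proof is correct and follows essentially the same route as the paper. You restrict to each block, use $r$-uniformity to show that within a block the traces are either all equal or pairwise distinct, and conclude that some block yields a forbidden $k$-sunflower (respectively $k$-cosunflower) in a copy of $\mathcal U$. The sentence beginning ``In the first case they are pairwise distinct\ldots'' is garbled, but the surrounding argument is sound.
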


\begin{proof}
Suppose that $B_1,\dots,B_k\in\mathcal U^{\boxtimes t}$ form a sunflower.  Restricting to any block gives $r$-element sets $B_1^{(j)},\dots,B_k^{(j)}$ with equal pairwise intersections.  If two of these restrictions coincide, say $B_1^{(j)}=B_2^{(j)}=B$, then the common pairwise intersection equals $B$; hence $B\subseteq B_i^{(j)}$ for every $i$, and uniformity forces all restrictions to equal $B$.  Otherwise the $k$ restrictions are distinct and form a forbidden sunflower in $\mathcal U^{(j)}$.  Thus all restrictions are equal in every block, so $B_1=\cdots=B_k$, contrary to distinctness.

For cosunflowers the argument is dual.  If two restrictions coincide with value $B$, then their union is $B$, so every other restriction is contained in $B$ and hence equals $B$ by uniformity.  The same blockwise conclusion follows.
\end{proof}

\begin{proposition}[Existence and duality of pressure]\label{prop:pressure}
For every $k\ge3$ and $z>0$, the limits
\[
\Lambda_k(z):=\lim_{n\to\infty}W_k(n;z)^{1/n},\qquad
\wtLambda_k(z):=\lim_{n\to\infty}C_k(n;z)^{1/n}
\]
exist.  They satisfy
\begin{equation}\label{eq:duality}
\wtLambda_k(z)=z\Lambda_k(1/z),\qquad \Lambda_k(1)=\wtLambda_k(1)=\mu_k^{\mathrm S},
\end{equation}
and
\begin{equation}\label{eq:trivial-pressure}
z\le\Lambda_k(z)\le1+z,
\qquad 1\le\wtLambda_k(z)\le1+z.
\end{equation}
\end{proposition}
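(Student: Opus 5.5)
The plan has three parts: prove existence of the limits by supermultiplicativity, get duality from complementation, and read off the bounds from explicit families.

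\emph{Duality first.} Complementation $S\mapsto[n]\setminus S$ is a bijection of $2^{[n]}$. It sends $k$-sunflowers to $k$-cosunflowers, since $(S_i\cap S_j)^c=S_i^c\cup S_j^c$, and it preserves distinctness. A sunflower-free $\cF$ therefore maps to a cosunflower-free $\cG$, and
\[
\sum_{S\in\cF}z^{n-|S|}=z^n\sum_{S\in\cF}(1/z)^{|S|}.
\]
Hence $C_k(n;z)=z^nW_k(n;1/z)$ exactly. Once $\Lambda_k$ exists for every $z>0$, this gives existence of $\wtLambda_k$ and $\wtLambda_k(z)=z\Lambda_k(1/z)$. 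At $z=1$ we have $W_k(n;1)=F_k(n)$, so $\Lambda_k(1)=\wtLambda_k(1)=\mu_k^{\mathrm S}$, and existence of $\mu_k^{\mathrm S}$ comes for free.

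\emph{Existence of $\Lambda_k$.} The natural route is Fekete's lemma applied to $\log W_k(n;z)$, which requires the supermultiplicativity $W_k(m+n;z)\ge W_k(m;z)W_k(n;z)$. Lemma~\ref{lem:tensor} only handles uniform families, so I would reduce to it.
\begin{itemize}
\item Split an optimal family $\cF$ on $[n]$ by layers: $\cF_r=\cF\cap\binom{[n]}{r}$.
\item Since there are at most $n+1$ layers, some $r$ satisfies $|\cF_r|z^r\ge W_k(n;z)/(n+1)$.
\item Take $t$-fold tensor powers via Lemma~\ref{lem:tensor}. This gives $W_k(tn;z)\ge (|\cF_r|z^r)^t\ge (W_k(n;z)/(n+1))^t$.
\end{itemize}
Adding empty coordinates shows $W_k$ is nondecreasing in $n$ for fixed $z$, which handles lengths that are not multiples of $n$. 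Together with the a priori bound $W_k(n;z)\le(1+z)^n$ (the sum over all of $2^{[n]}$), this yields
\[
\liminf_N W_k(N;z)^{1/N}\ge W_k(n;z)^{1/n}(n+1)^{-1/n}.
\]
For the non-multiple lengths, write $N=tn+s$ with $0\le s<n$. Monotonicity gives $W_k(N;z)\ge W_k(tn;z)$. The loss from $N^{1/N}$ versus $(tn)^{1/(tn)}$ is harmless, though when $z<1$ one must note that the bound $W\ge1$ prevents any collapse. Taking $\limsup_n$ on the right then gives $\liminf\ge\limsup$, because $(n+1)^{1/n}\to1$. The limit therefore exists and lies in $[1,1+z]$.

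\emph{Trivial bounds.}
\begin{itemize}
\item The upper bound $\Lambda_k(z)\le1+z$ is immediate from $W_k(n;z)\le(1+z)^n$.
\item For $\Lambda_k(z)\ge z$, take $\cF=\{[n]\}$, which gives $W_k(n;z)\ge z^n$.
\item Also $\Lambda_k(z)\ge1$ from $\{\varnothing\}$, though only $z$ is claimed.
\item For $\wtLambda_k$, use duality: $\wtLambda_k(z)=z\Lambda_k(1/z)\ge z\cdot(1/z)=1$ and $\wtLambda_k(z)\le z(1+1/z)=1+z$.
\end{itemize}

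The main obstacle is the supermultiplicativity step, because sunflower-freeness is \emph{not} preserved under general products of non-uniform families. The layer decomposition combined with Lemma~\ref{lem:tensor} costs only a polynomial factor $(n+1)$, and that factor disappears after taking $n$-th roots.
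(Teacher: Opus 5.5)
Your proposal is correct and matches the paper's proof: pick the heaviest uniform layer, apply Lemma~\ref{lem:tensor} to get $W_k(tn;z)\ge\bigl(W_k(n;z)/(n+1)\bigr)^t$, and use monotonicity in $n$ to conclude $\liminf\ge\limsup$; then complementation gives $C_k(n;z)=z^nW_k(n;1/z)$, and the bounds come from $\{[n]\}$, $2^{[n]}$ and duality. Your opening appeal to Fekete's lemma is never actually used, since you, like the paper, skip exact supermultiplicativity and argue directly with $\liminf$ and $\limsup$, which is all that is needed.
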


\begin{proof}
Choose a $k$-sunflower-free family $\cF\subseteq2^{[n]}$ attaining $W_k(n;z)$.  One of its $n+1$ uniform layers has weighted mass at least $W_k(n;z)/(n+1)$.  Lemma~\ref{lem:tensor} therefore gives
\[
W_k(tn;z)\ge\left(\frac{W_k(n;z)}{n+1}\right)^t.
\]
Also $W_k(m;z)$ is nondecreasing in $m$, by embedding a family on $[m]$ into one on $[m+1]$ and never using the new point.  If $tn\le m<(t+1)n$, then
\[
W_k(m;z)^{1/m}\ge
\left(\frac{W_k(n;z)}{n+1}\right)^{t/m}.
\]
Taking $m\to\infty$, then taking the supremum over arbitrarily large $n$, shows that the liminf of $W_k(m;z)^{1/m}$ is at least its limsup.  Hence the limit exists.

Complementation in $[n]$ bijects cosunflower-free and sunflower-free families and transforms $z^{|S|}$ into $z^n(1/z)^{|[n]\setminus S|}$.  Thus
\[
C_k(n;z)=z^nW_k(n;1/z),
\]
which proves the existence of the second limit and~\eqref{eq:duality}.  Finally, the one-set family $\{[n]\}$ has weight $z^n$, while the whole power set has weight $(1+z)^n$; this gives the first pair of inequalities in~\eqref{eq:trivial-pressure}.  The second pair follows by duality.
\end{proof}

We shall use only elementary consequences of Mertens' estimates; see, for example, \cite{MontgomeryVaughan2006}
\begin{equation}\label{eq:mertens}
\sum_{p\le x}\frac1p=\log\log x+O(1),\qquad
\sum_{p\le x}\frac{\log p}{p}=\log x+O(1).
\end{equation}
Here and below $\omega(m)$ denotes the number of distinct prime factors of $m$, and $\mu$ is the M\"obius function.

\begin{lemma}[Two harmonic Euler-product estimates]\label{lem:analytic}
For every fixed $u,z>0$ and $X\ge3$,
\begin{equation}\label{eq:omega-upper}
\sum_{m\le X}\frac{u^{\omega(m)}}m\ll_u(\log X)^u
\end{equation}
and
\begin{equation}\label{eq:sqfree-asymp}
H_z(X):=\sum_{\substack{q\le X\\\mu(q)^2=1}}\frac{z^{\omega(q)}}q
\asymp_z(\log X)^z.
\end{equation}
\end{lemma}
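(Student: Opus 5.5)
The plan is to compare each sum with a truncated Euler product, using only the Mertens estimates~\eqref{eq:mertens}.

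\emph{Upper bound~\eqref{eq:omega-upper}.} Every $m\le X$ has all its prime factors $\le X$, and $u^{\omega(\cdot)}$ is multiplicative. Hence
\[
\sum_{m\le X}\frac{u^{\omega(m)}}m\le\prod_{p\le X}\Bigl(1+u\sum_{j\ge1}p^{-j}\Bigr)=\prod_{p\le X}\Bigl(1+\frac{u}{p-1}\Bigr).
\]
Taking logarithms gives $\sum_{p\le X}\log(1+u/(p-1))$. This equals $u\sum_{p\le X}1/p+O_u(1)$: for $p>2u$ the difference from $u/p$ is $O_u(p^{-2})$, which is summable, and the finitely many primes $p\le 2u$ contribute $O_u(1)$. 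Mertens then gives $(\log X)^u e^{O_u(1)}$.

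\emph{Upper bound in~\eqref{eq:sqfree-asymp}.} This is the same argument with the exact factor $\prod_{p\le X}(1+z/p)$.

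\emph{Lower bound in~\eqref{eq:sqfree-asymp}.} This is the only real obstacle, because the truncation $q\le X$ removes mass from the product. I would use Rankin's trick in reverse. Put $P(y)=\prod_{p\le y}(1+z/p)\asymp_z(\log y)^z$ with $y=X^{\delta}$, where $\delta=\delta(z)>0$ is a small constant. Every squarefree $q$ built from primes $\le y$ satisfies
\[
\sum_{q>X,\ p\mid q\Rightarrow p\le y}\frac{z^{\omega(q)}}{q}\le X^{-\sigma}\sum_{p\mid q\Rightarrow p\le y}\frac{z^{\omega(q)}}{q^{1-\sigma}}=X^{-\sigma}\prod_{p\le y}\Bigl(1+\frac{z}{p^{1-\sigma}}\Bigr).
\]
Choose $\sigma=1/\log y$. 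Then $p^{\sigma}\le e$ for $p\le y$, and
\[
\frac{z(p^{\sigma}-1)}{p}\ll\frac{z\,\sigma\log p}{p}.
\]
Summing over $p\le y$ and using the second Mertens estimate, the product is at most $P(y)\exp(O_z(1))$. The factor $X^{-\sigma}=e^{-1/\delta}$, so the tail is at most $e^{-1/\delta+C_z}P(y)$, where $C_z$ does not depend on $\delta$. Fix $\delta$ small enough that $e^{-1/\delta+C_z}\le 1/2$. Then
\[
H_z(X)\ge\tfrac12P(y)\gg_z(\log X^{\delta})^z\asymp_z(\log X)^z.
\]

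For small $X$, all of these statements are trivial adjustments of the implied constants.
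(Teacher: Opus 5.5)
Your argument is correct. The two upper bounds are exactly the paper's: extend to the full Euler product over $p\le X$ and apply Mertens.

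For the lower bound in \eqref{eq:sqfree-asymp}, you and the paper share the same framework. Both truncate the Euler product at $X^{\delta}$ and show that the squarefree $q>X$ carry at most a constant fraction of its mass. The difference is how that tail is controlled.

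The paper normalizes the weights into a random variable $Q$ and applies Markov's inequality to $\log Q$. By independence of the prime choices, $\mathbb E\log Q=\sum_{p\le Y}z\log p/(p+z)\le z\log Y+O(1)$. So with $Y=X^{1/(8(1+z))}$ one gets $\mathbb P(Q>X)\le 1/4$ straight from the second Mertens estimate.

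Your Rankin trick is Markov's inequality applied to $Q^{\sigma}$ instead. It costs an extra step, the bound $p^{\sigma}-1\ll\sigma\log p$ for $p\le y$. It also needs the check that the resulting constant $C_z$ is uniform in $\delta$, which you correctly made. In return it shows the lost fraction is exponentially small, $e^{-1/\delta+O_z(1)}$, rather than roughly $z\delta$.

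Since only a constant fraction is needed here, the paper's first-moment version is slightly lighter. Both routes give $H_z(X)\gg_z(\log X)^z$, with a truncation exponent $\delta$ of order $1/(1+z)$.
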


\begin{proof}
Extending the first sum to all integers composed of primes at most $X$ gives
\[
\sum_{m\le X}\frac{u^{\omega(m)}}m
\le\prod_{p\le X}\left(1+\frac{u}{p-1}\right).
\]
For all sufficiently large $p$,
$\log(1+u/(p-1))=u/p+O_u(p^{-2})$, so~\eqref{eq:mertens} yields~\eqref{eq:omega-upper}.

For~\eqref{eq:sqfree-asymp}, the upper bound follows from
\[
H_z(X)\le\prod_{p\le X}\left(1+\frac zp\right)\ll_z(\log X)^z.
\]
For the lower bound, put $\theta=1/(8(1+z))$ and $Y=X^\theta$.  Normalize the weights $z^{\omega(q)}/q$ over squarefree $q$ whose prime factors are at most $Y$ to obtain a random variable $Q$.  Its normalizing factor is
\[
Z_Y=\prod_{p\le Y}\left(1+\frac zp\right)\asymp_z(\log Y)^z\asymp_z(\log X)^z,
\]
and independence of the prime choices gives
\[
\mathbb E\log Q
=\sum_{p\le Y}\frac{z\log p}{p+z}
\le z\sum_{p\le Y}\frac{\log p}{p}
=z\log Y+O_z(1)\le\frac18\log X+O_z(1).
\]
For large $X$, Markov's inequality therefore gives $\mathbb P(Q>X)\le1/4$.  At least three quarters of the mass $Z_Y$ is contributed by $q\le X$, proving the required lower bound.
\end{proof}

\section{Arithmetic transference}

We now prove matching weighted upper and lower principles.  Their combination is what forces a single logarithmic exponent.

\begin{theorem}[Weighted upper bound]\label{thm:upper}
For fixed $k\ge3$ and $z>0$,
\begin{equation}\label{eq:weighted-upper}
f_k(N)\le(\log N)^{\Lambda_k(z)-z+o(1)}.
\end{equation}
Equivalently,
\[
\limsup_{N\to\infty}\frac{\log f_k(N)}{\log\log N}
\le\Lambda_k(z)-z.
\]
\end{theorem}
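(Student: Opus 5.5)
The plan is to reduce to squarefree integers, where LCM-$k$-freeness becomes cosunflower-freeness of prime supports. A random-restriction argument will then convert the varying prime weights $1/p$ into the uniform weight $1/z$ of $C_k(n;1/z)$. Finally the duality \eqref{eq:duality} turns the resulting exponent into $\Lambda_k(z)-z$.

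\emph{Step 1: reduction.} Fix $z>0$ and an LCM-$k$-free $A\subseteq[N]$. Put $P_0=\lceil z\rceil+1$. Write each $a\in A$ uniquely as $a=b\,m\,q$, where:
\begin{itemize}
\item $b$ is composed of primes $<P_0$;
\item $m$ is squarefull and composed of primes $\ge P_0$;
\item $q$ is squarefree, composed of primes $\ge P_0$, and coprime to $m$.
\end{itemize}
Fix the pair $(b,m)$ and let $A_{b,m}=\{q: bmq\in A\}$. Since $bm$ is coprime to every such $q$, we have $\lcmop(bmq_i,bmq_j)=bm\,\lcmop(q_i,q_j)$. Distinct $q$'s give distinct $a$'s, so $A_{b,m}$ is again LCM-$k$-free. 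Moreover $\sum_b 1/b\le\prod_{p<P_0}(1-1/p)^{-1}=O_z(1)$, and $\sum_m 1/m$ over squarefull $m$ converges. Hence
\[
\sum_{a\in A}\frac1a\ll_z\max_{b,m}\sum_{q\in A_{b,m}}\frac1q.
\]
So it suffices to bound harmonic sums of LCM-$k$-free sets of squarefree $q\le N$ with all prime factors in $X:=\{p\text{ prime}:P_0\le p\le N\}$. For such $q$, $\lcmop$ corresponds to the union of prime supports. Therefore the supports form a $k$-cosunflower-free family $\cG\subseteq 2^X$, and $1/q=\prod_{p\in S}1/p$.

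\emph{Step 2: random restriction.} Fix $\varepsilon>0$. By Proposition~\ref{prop:pressure} there is $K_\varepsilon$ with
\[
C_k(n;1/z)\le K_\varepsilon\bigl(\wtLambda_k(1/z)+\varepsilon\bigr)^n\quad\text{for all } n.
\]
Let $R\subseteq X$ be random, containing each prime $p$ independently with probability $z/p\le1$; this is where $p\ge P_0>z$ is used. The subfamily $\{S\in\cG:S\subseteq R\}$ is cosunflower-free on the ground set $R$. Hence
\[
\sum_{S\in\cG}\prod_{p\in S}\frac1p=\mathbb E\sum_{S\in\cG,\,S\subseteq R}(1/z)^{|S|}\le K_\varepsilon\,\mathbb E\bigl(\wtLambda_k(1/z)+\varepsilon\bigr)^{|R|}.
\]
The equality holds because $\mathbb P(S\subseteq R)\,(1/z)^{|S|}=\prod_{p\in S}1/p$. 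Put $L=\wtLambda_k(1/z)+\varepsilon$. By independence the expectation equals
\[
\prod_{p\in X}\Bigl(1+\frac zp(L-1)\Bigr)\le\exp\Bigl(z(L-1)\sum_{p\le N}\frac1p\Bigr).
\]
Here $L-1\ge0$ by \eqref{eq:trivial-pressure}. By \eqref{eq:mertens} this is at most $(\log N)^{z(L-1)+O(\varepsilon)}\cdot O_{z,\varepsilon}(1)$.

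\emph{Step 3: conclusion.} By \eqref{eq:duality}, $z\,\wtLambda_k(1/z)=\Lambda_k(z)$, so $z(L-1)=\Lambda_k(z)-z+z\varepsilon$. Combining with Step 1 and letting $\varepsilon\to0$ gives \eqref{eq:weighted-upper}.

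The main obstacle is Step 2: converting the non-uniform weights $1/p$ into the single parameter of $C_k$. The random sampling with inclusion probabilities $z/p$ handles this, because cosunflower-freeness is inherited by subfamilies supported on $R$. This sampling needs probabilities at most $1$, which forces the separate treatment of primes $p\le z$ in Step 1. The other delicate point is the prime powers, which would otherwise inflate the ground set to size $\gg\log N/\log p$ per prime. They are removed by factoring out the squarefull part $m$, whose reciprocal sum converges.
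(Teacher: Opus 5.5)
Your argument is correct, and it takes a genuinely different route from the paper's.

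\textbf{The paper's route.} It never reduces to squarefree integers. For each $m$ it forms the family $\cF_m$ of prime sets $S\subseteq P(m)$ with $m/\prod_{p\in S}p\in A$. It checks that $\cF_m$ is $k$-sunflower-free, because deleting primes from a common multiple turns least common multiples into intersections. It then multiplies $\sum_{a\in A}1/a$ by the squarefree kernel $H_z(N)$ and bounds the product by $\sum_{m\le N^2}W_k(\omega(m);z)/m\ll(\log N)^{\Lambda_k(z)+\delta}$. Dividing by $H_z(N)\gg_z(\log N)^z$ gives $\Lambda_k(z)-z$ directly in terms of the sunflower pressure.

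\textbf{Your route.} You strip off the smooth and squarefull parts at bounded cost, so that least common multiples become unions of supports and the relevant structure is cosunflower-freeness. Your random restriction with inclusion probabilities $z/p$ then homogenizes the weights $1/p$ into $(1/z)^{|S|}$. The resulting Euler product gives the exponent $z(\wtLambda_k(1/z)-1)$, which you convert to $\Lambda_k(z)-z$ by~\eqref{eq:duality}.

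\textbf{What each buys.} Your route needs only the upper Mertens estimate. It avoids the lower bound $H_z(N)\gg_z(\log N)^z$, which is the truncation and Markov argument in Lemma~\ref{lem:analytic}. The paper's route handles arbitrary elements of $A$ without preprocessing and reaches $\Lambda_k(z)$ without complementation. The two computations are the same Euler product in different guises: $\prod_p(1+u/p)/(1+z/p)$ versus $\prod_p(1+(u-z)/p)$, with $u\approx\Lambda_k(z)$.

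A further benefit of your formulation: with $u=1/z$ it says the upper exponent is at most $(\wtLambda_k(u)-1)/u$. This sits directly beside the lower bound $\log\wtLambda_k(u)/u$ of Theorem~\ref{thm:lower}. The gap between the two is just $x-1$ versus $\log x$ at $x=\wtLambda_k(u)$, which makes the squeeze leading to~\eqref{eq:derivative-zero} transparent.
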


\begin{proof}
Let $A\subseteq[N]$ be LCM-$k$-free.  For each integer $m\ge1$, let $P(m)$ be its set of prime divisors and define
\[
\cF_m:=\left\{S\subseteq P(m):m=a\prod_{p\in S}p\text{ for some }a\in A\right\}.
\]
We claim that $\cF_m$ is $k$-sunflower-free.  Otherwise let $S_1,\dots,S_k$ be a sunflower with kernel $K$, and write $a_i=m/\prod_{p\in S_i}p\in A$.  Distinctness of the $S_i$ implies distinctness of the $a_i$.  For every prime $p\mid m$,
\[
v_p(a_i)=v_p(m)-\1_{p\in S_i},
\]
and hence
\[
v_p(\lcmop(a_i,a_j))
=v_p(m)-\1_{p\in S_i\cap S_j}
=v_p(m)-\1_{p\in K}.
\]
Thus all pairwise least common multiples are equal, contradicting the hypothesis on $A$.

Multiply the desired harmonic sum by the squarefree kernel $H_z(N)$ from Lemma~\ref{lem:analytic}:
\[
T_z(N):=\left(\sum_{a\in A}\frac1a\right)H_z(N)
=\sum_{a\in A}\ \sum_{\substack{q\le N\\\mu(q)^2=1}}
\frac{z^{\omega(q)}}{aq}.
\]
For a pair $(a,q)$ put $m=aq$ and $S=P(q)$.  Then $m\le N^2$, $S\in\cF_m$, and the summand is $z^{|S|}/m$.  Therefore
\[
T_z(N)\le\sum_{m\le N^2}\frac1m\sum_{S\in\cF_m}z^{|S|}
\le\sum_{m\le N^2}\frac{W_k(\omega(m);z)}m.
\]
For every $\delta>0$, the definition of $\Lambda_k(z)$ supplies a constant $C=C(k,z,\delta)$ such that
$W_k(r;z)\le C(\Lambda_k(z)+\delta)^r$ for all $r\ge0$.  Lemma~\ref{lem:analytic} now gives
\[
T_z(N)\ll_{k,z,\delta}(\log N)^{\Lambda_k(z)+\delta}.
\]
Since $H_z(N)\gg_z(\log N)^z$, we obtain
\[
\sum_{a\in A}\frac1a\ll_{k,z,\delta}
(\log N)^{\Lambda_k(z)-z+\delta}.
\]
Taking the maximum over $A$ and then letting $\delta\downarrow0$ proves the theorem.
\end{proof}

For the lower bound we need a simple blow-up fact.

\begin{definition}
Let $U_1,\dots,U_t$ be pairwise disjoint nonempty sets.  For $G\subseteq[t]$, define
\[
\mathcal B(G;U_1,\dots,U_t)
:=\left\{\{x_i:i\in G\}:x_i\in U_i\text{ for every }i\in G\right\}.
\]
For $\cG\subseteq2^{[t]}$, put
\[
\mathcal B(\cG;U_1,\dots,U_t)
:=\bigcup_{G\in\cG}\mathcal B(G;U_1,\dots,U_t).
\]
\end{definition}

\begin{lemma}[Cosunflower-free blow-ups]\label{lem:blowup}
If $\cG\subseteq2^{[t]}$ is $k$-cosunflower-free and $k\ge3$, then
$\mathcal B(\cG;U_1,\dots,U_t)$ is $k$-cosunflower-free.
\end{lemma}

\begin{proof}
Suppose that distinct $B_1,\dots,B_k$ in the blow-up have equal pairwise unions, and let
$G_j=\{i:B_j\cap U_i\ne\varnothing\}$.  Then the pairwise unions $G_i\cup G_j$ are equal.  We claim that the $G_j$ are distinct.  If, say, $G_1=G_2$ while $B_1\ne B_2$, then in some common block $U_i$ the two sets choose different elements $x_1\ne x_2$.  Since $k\ge3$, compare with $B_3$.  Equality of $B_1\cup B_2$ with $B_1\cup B_3$ forces $B_3$ to choose $x_2$ in $U_i$, while equality with $B_2\cup B_3$ forces it to choose $x_1$.  This is impossible because a blow-up member chooses at most one element from each block.  Thus $G_1,\dots,G_k$ are distinct and form a $k$-cosunflower in $\cG$, a contradiction.
\end{proof}

\begin{lemma}[Greedy bucketing]\label{lem:bucket}
Let $w_1,\dots,w_s$ be positive numbers, each smaller than $\delta$, and suppose
$\sum_jw_j\ge t(z+\delta)$.  Then the indices can be partitioned partially into $t$ pairwise disjoint groups $I_1,\dots,I_t$ such that
\[
z\le\sum_{j\in I_i}w_j<z+\delta\qquad(1\le i\le t).
\]
\end{lemma}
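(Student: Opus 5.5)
We build the groups one at a time by a greedy sweep, using only the hypotheses $w_j<\delta$ and $\sum_j w_j\ge t(z+\delta)$. Process the indices $1,2,\dots,s$ in order, keeping a current group $I$ that starts empty. Add the next index to $I$; as soon as $\sum_{j\in I}w_j\ge z$, close $I$ and open a new empty group. Stop once $t$ groups have been closed, and discard the remaining indices; the word ``partially'' in the statement allows this.

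Each closed group $I_i$ satisfies $\sum_{j\in I_i}w_j\ge z$ because that is the closing rule. For the upper bound, consider the last index $j_0$ added to $I_i$. Just before it was added the sum was $<z$, since otherwise the group would already have closed; the only exception is the empty starting group, whose sum is $0<z$. Hence
\[
\sum_{j\in I_i}w_j<z+w_{j_0}<z+\delta .
\]
The groups are pairwise disjoint by construction.

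The one point needing care is that the sweep does close $t$ groups before the indices run out. Suppose it closes only $t'<t$ groups. Each closed group carries mass $<z+\delta$. The final open group, which may be empty, carries mass $<z$, since otherwise it would have closed. The total mass is therefore
\[
\sum_j w_j<t'(z+\delta)+z\le (t-1)(z+\delta)+z<t(z+\delta),
\]
which contradicts the hypothesis. So all $t$ groups are produced.

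If $z\le0$ were allowed, the closing condition would trigger immediately, and the argument goes through unchanged, with each group being a single index of weight $<\delta$. No step here is a real obstacle.
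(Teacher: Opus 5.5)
Your proof is correct for $z>0$, which is the only case the paper uses, and it is essentially the paper's greedy argument. The paper proves termination forward, observing that before the $i$th group is formed at least $(t-i+1)(z+\delta)\ge z$ of unused mass remains, whereas you argue by contradiction on the total mass; the two are equivalent. Your closing aside is inaccurate for $z<0$: a single weight $w_j<\delta$ need not satisfy $w_j<z+\delta$, and for $z\le-\delta$ the lemma is false outright, since every group has nonnegative sum. That case never arises, however.
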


\begin{proof}
Build the groups successively, adding unused weights until their sum first reaches $z$.  The overshoot is less than $\delta$.  Before the $i$th group is formed, the previous groups have total weight less than $(i-1)(z+\delta)$, so at least $(t-i+1)(z+\delta)\ge z$ remains.  The procedure therefore produces all $t$ groups.
\end{proof}

\begin{theorem}[Weighted lower bound]\label{thm:lower}
For fixed $k\ge3$ and $z>0$,
\begin{equation}\label{eq:weighted-lower}
\liminf_{N\to\infty}\frac{\log f_k(N)}{\log\log N}
\ge\frac{\log\wtLambda_k(z)}z.
\end{equation}
Equivalently,
\[
f_k(N)\ge(\log N)^{\log\wtLambda_k(z)/z-o(1)}.
\]
\end{theorem}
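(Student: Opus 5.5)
We construct an explicit LCM-$k$-free set from a cosunflower-free family, using Lemma~\ref{lem:blowup} and Lemma~\ref{lem:bucket}. Fix $\varepsilon>0$ and choose $t$ with $C_k(t;z)\ge(\wtLambda_k(z)-\varepsilon)^t$. Let $\cG\subseteq2^{[t]}$ be a $k$-cosunflower-free family attaining $C_k(t;z)$. Given $N$, set $y=\log N$, or more precisely a suitable power such as $x=(\log N)^{1/(2t(z+\delta))}$. We partition primes in an interval $(x_0,x]$, with $x_0$ large, into $t$ disjoint buckets $U_1,\dots,U_t$. The goal is that in each bucket $\sum_{p\in U_i}1/p\in[z',z'+\delta)$, where the target weight $z'\approx \frac{1}{t}\log\log x$ will grow. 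Lemma~\ref{lem:bucket} applies because each $1/p<\delta$ for $p>x_0$, and Mertens~\eqref{eq:mertens} gives total weight $\log\log x+O(1)$.

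The set is
\[
A=\Bigl\{\textstyle\prod_{p\in B}p : B\in\mathcal B(\cG;U_1,\dots,U_t)\Bigr\},
\]
where we read each member $B$ as a set of primes. Equivalently, $A$ is built from the complements in the big product $P=\prod_{p\in\bigcup U_i}p$; using squarefree numbers indexed by prime sets, lcm corresponds to union. So a $k$-tuple with equal pairwise lcm is exactly a $k$-cosunflower of prime sets, and by Lemma~\ref{lem:blowup} there is none. Hence $A$ is LCM-$k$-free.

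For the size condition, every element has at most $t$ prime factors, each $\le x$, so it is at most $x^t\le N$ once $x\le N^{1/t}$; this is trivially satisfied with $x$ polylogarithmic in $N$. The harmonic sum factors:
\[
\sum_{a\in A}\frac1a=\sum_{G\in\cG}\prod_{i\in G}\sum_{p\in U_i}\frac1p\ \ge\ \sum_{G\in\cG}w^{|G|},
\]
where $w$ is the common bucket weight.

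This gives only $C_k(t;w)$ with $w$ growing, not $z$ fixed, so the scaling must be rearranged. The fix is to apply the construction at scale rather than with fixed $t$. Take $w=z$ fixed and let the number of buckets $T$ grow, $T\approx \log\log x/(z+\delta)$. The family on $[T]$ is then a near-optimal cosunflower-free family obtained by tensoring (Lemma~\ref{lem:tensor}, applied to a heavy uniform layer as in Proposition~\ref{prop:pressure}), which gives weight $\ge (\wtLambda_k(z)-\varepsilon)^{T}$ up to polynomial losses. We need elements $\le N$. Products of at most $T$ primes each $\le x$ are $\le x^T$, so take $x=N^{1/T}$; then $\log\log x=\log\log N-\log T$ and $T\sim\log\log N/(z+\delta)$. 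The harmonic sum is then at least
\[
(\wtLambda_k(z)-\varepsilon)^{T}z^{0}\cdot(\text{poly loss})=(\log N)^{\log(\wtLambda_k(z)-\varepsilon)/(z+\delta)-o(1)},
\]
using $\sum_{G}\prod_{i\in G}\sum_{U_i}1/p\ge\sum_G z^{|G|}$. Letting $\varepsilon,\delta\to0$ gives~\eqref{eq:weighted-lower}.

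The main obstacle is this bookkeeping: ensuring uniformity so that the tensor power remains cosunflower-free, controlling the subexponential loss $(t+1)^{T/t}$, and checking that $\log T=o(\log\log N)$ so that the bucket count is $(1+o(1))\log\log N/(z+\delta)$.
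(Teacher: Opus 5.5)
Once you discard the fixed-$t$ false start in your first paragraph, your construction is essentially the paper's. You bucket the primes in $(x_0,N^{1/T}]$ into $T\sim\log\log N/(z+\delta)$ groups of reciprocal mass in $[z,z+\delta)$ via Lemma~\ref{lem:bucket}. You then blow up a $k$-cosunflower-free family on $[T]$ of $z$-weight about $\wtLambda_k(z)^T$, and use that the lcm of squarefree numbers corresponds to the union of prime supports. The aside about complements in $P$ is wrong, since complements turn unions into intersections, but your argument never uses it.

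One step needs correcting. Tensoring a heavy layer of a family on a fixed $[t]$ loses the factor $(t+1)^{-\lfloor T/t\rfloor}$. For fixed $t$ this is exponential in $T$, not polynomial, and it costs $\log(t+1)/(t(z+\delta))$ in the exponent. There are two fixes. You can let $t\to\infty$ as well. Or, as the paper does, you can skip the tensoring: Proposition~\ref{prop:pressure} gives $C_k(T;z)^{1/T}\to\wtLambda_k(z)$, hence $C_k(T;z)\ge\lambda^T$ for any fixed $\lambda<\wtLambda_k(z)$ and all large $T$.

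Also, Lemma~\ref{lem:bucket} needs $T(z+\delta)\le\sum_{x_0<p\le x}1/p=\log\log x-O_\delta(1)$, so $T$ must be chosen with some slack below $\log\log x/(z+\delta)$. The paper takes $t=\lfloor(1-\eta)\log\log N/z\rfloor$ with $\delta<\eta z/4$. This also handles the circular dependence between $T$ and $x=N^{1/T}$.
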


\begin{proof}
The assertion is trivial if $\wtLambda_k(z)=1$, so fix $1<\lambda<\wtLambda_k(z)$ and $0<\eta<1/2$.  Put
\[
L=\log\log N,\qquad
t=\left\lfloor\frac{(1-\eta)L}{z}\right\rfloor,
\qquad x=N^{1/t},\qquad y=\exp(L^{2/3}).
\]
As $N\to\infty$, we have $t\to\infty$ and $y<x$.  By the definition of $\wtLambda_k(z)$, for all sufficiently large $t$ there is a $k$-cosunflower-free family $\cG_t\subseteq2^{[t]}$ satisfying
\begin{equation}\label{eq:weighted-family}
\sum_{G\in\cG_t}z^{|G|}\ge\lambda^t.
\end{equation}
Choose a fixed $\delta$ with $0<\delta<\eta z/4$.  Mertens' theorem gives
\begin{align*}
\sum_{y<p\le x}\frac1p
&=\log\log x-\log\log y+O(1)\\
&=L-\log t-\frac23\log L+O(1)=L-o(L).
\end{align*}
On the other hand,
\[
t(z+\delta)
\le(1-\eta)L\left(1+\frac\delta z\right)+O(1)
\le(1-\eta/2)L+O(1).
\]
Also $1/p\le1/y<\delta$ for all primes $p>y$ once $N$ is large.  Lemma~\ref{lem:bucket} therefore supplies disjoint prime sets
$P_1,\dots,P_t\subseteq(y,x]$ such that
\begin{equation}\label{eq:bucket-mass}
J_i:=\sum_{p\in P_i}\frac1p\in[z,z+\delta)
\qquad(1\le i\le t).
\end{equation}

For $G\in\cG_t$, let
\[
A(G):=\left\{\prod_{i\in G}p_i:p_i\in P_i\text{ for each }i\in G\right\},
\qquad A:=\bigcup_{G\in\cG_t}A(G).
\]
Every $a\in A$ is squarefree and satisfies $a\le x^{|G|}\le x^t=N$.  The prime-support family of $A$ is the blow-up $\mathcal B(\cG_t;P_1,\dots,P_t)$, which is $k$-cosunflower-free by Lemma~\ref{lem:blowup}.  Since prime supports convert least common multiples of squarefree integers into unions, $A$ is LCM-$k$-free.

The sets $A(G)$ are pairwise disjoint, because the buckets are disjoint and the set of buckets meeting the prime support recovers $G$.  Hence~\eqref{eq:bucket-mass} and~\eqref{eq:weighted-family} give
\[
\sum_{a\in A}\frac1a
=\sum_{G\in\cG_t}\prod_{i\in G}J_i
\ge\sum_{G\in\cG_t}z^{|G|}
\ge\lambda^t.
\]
Since $t=(1-\eta)L/z+O(1)$,
\[
\lambda^t=(\log N)^{(1-\eta)\log\lambda/z+o(1)}.
\]
Letting first $N\to\infty$, then $\lambda\uparrow\wtLambda_k(z)$ and $\eta\downarrow0$, proves~\eqref{eq:weighted-lower}.
\end{proof}

\section{The exact exponent}

We now combine the two transference bounds.  Set
\[
\alpha_k:=\liminf_{N\to\infty}\frac{\log f_k(N)}{\log\log N},
\qquad
\beta_k:=\limsup_{N\to\infty}\frac{\log f_k(N)}{\log\log N}
\]
and $D_k(z):=\Lambda_k(z)-z$.  By~\eqref{eq:trivial-pressure}, $0\le D_k(z)\le1$.

\begin{theorem}[Pressure variational formulas]\label{thm:pressure-formulas}
For every $k\ge3$, the limit $\gamma_k=\alpha_k=\beta_k$ exists and
\begin{align}
\gamma_k
&=\lim_{z\to\infty}\bigl(\Lambda_k(z)-z\bigr)
=\inf_{z>0}\bigl(\Lambda_k(z)-z\bigr),\label{eq:gamma-upper-var}\\
&=\sup_{z>0}z\log\frac{\Lambda_k(z)}z
=\sup_{u>0}\frac{\log\wtLambda_k(u)}u.
\label{eq:gamma-lower-var}
\end{align}
Equivalently,
\begin{equation}\label{eq:derivative-zero}
\gamma_k=\lim_{u\downarrow0}\frac{\wtLambda_k(u)-1}{u}.
\end{equation}
\end{theorem}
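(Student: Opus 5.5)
Set $D_k(z)=\Lambda_k(z)-z$ and $E_k(u)=\log\wtLambda_k(u)/u$. Theorem~\ref{thm:upper} gives $\beta_k\le\inf_{z>0}D_k(z)$, and Theorem~\ref{thm:lower} gives $\alpha_k\ge\sup_{u>0}E_k(u)$. Since $\alpha_k\le\beta_k$, the whole theorem reduces to the analytic inequality
\[
\inf_{z>0}D_k(z)\le\sup_{u>0}E_k(u).
\]
Once this holds, all four quantities in \eqref{eq:gamma-upper-var}--\eqref{eq:gamma-lower-var} collapse to $\alpha_k=\beta_k=\gamma_k$.

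First I rewrite the lower variational quantity. By the duality \eqref{eq:duality}, with $u=1/z$,
\[
E_k(u)=z\log\bigl(z\Lambda_k(1/z)\cdot\tfrac1z\cdot\ldots\bigr).
\]
More cleanly, $\wtLambda_k(u)=u\Lambda_k(1/u)$, so
\[
\frac{\log\wtLambda_k(u)}{u}=z\log\frac{\Lambda_k(z)}{z}.
\]
This gives the equality of the two sups in \eqref{eq:gamma-lower-var}.

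Next I compare $D$ and $E$ pointwise. Write $\Lambda_k(z)=z+D_k(z)$. Then
\[
z\log\frac{\Lambda_k(z)}{z}=z\log\Bigl(1+\frac{D_k(z)}z\Bigr).
\]
Since $0\le D_k\le 1$, this equals $D_k(z)-O(1/z)$ as $z\to\infty$, and it is at most $D_k(z)$ by $\log(1+x)\le x$. Hence
\[
\sup_z E\ \ge\ \limsup_{z\to\infty}\bigl(D_k(z)-O(1/z)\bigr)\ \ge\ \liminf_{z\to\infty}D_k(z)\ \ge\ \inf_z D_k(z).
\]
Combined with $\alpha_k\le\beta_k$, this closes the chain. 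It also shows that $\lim_{z\to\infty}D_k(z)$ exists and equals the infimum: all of $\limsup D$, $\liminf D$ and $\inf D$ are squeezed between $\alpha_k$ and $\beta_k$, and $\limsup D\le\sup E+o(1)$ because $E(1/z)\ge D(z)-O(1/z)$.

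For \eqref{eq:derivative-zero}, I put $u=1/z$. Then
\[
\frac{\wtLambda_k(u)-1}{u}=z\bigl(u\Lambda_k(z)-1\bigr)=\Lambda_k(z)-z=D_k(z),
\]
so \eqref{eq:derivative-zero} is the same statement as the limit in \eqref{eq:gamma-upper-var}.

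The step I expect to require care is the comparison $z\log(1+D/z)=D-O(1/z)$ with a constant uniform in $z$. This follows from $\log(1+x)\ge x-x^2/2$ and the bound $D_k\le1$ from \eqref{eq:trivial-pressure}. Everything else is bookkeeping with the two transference theorems.
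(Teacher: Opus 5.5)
Your argument is correct and is essentially the paper's proof. Both combine Theorem~\ref{thm:upper} and Theorem~\ref{thm:lower} via the duality $\wtLambda_k(u)=u\Lambda_k(1/u)$, and both use $z\log(1+D_k(z)/z)\ge D_k(z)-1/(2z)$ (from $0\le D_k\le1$) to squeeze $\limsup_{z\to\infty}D_k(z)\le\alpha_k\le\beta_k\le\inf_z D_k(z)$, after which \eqref{eq:derivative-zero} follows from $(\wtLambda_k(u)-1)/u=D_k(1/u)$. The only blemish is your garbled first display for $E_k(u)$, which you should delete, since the identity you give right after it is the correct one.
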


\begin{proof}
Theorem~\ref{thm:upper} gives $\beta_k\le D_k(z)$ for every $z>0$.  Apply Theorem~\ref{thm:lower} with parameter $1/z$ and use~\eqref{eq:duality}; then
\[
\alpha_k\ge z\log\frac{\Lambda_k(z)}z
=z\log\left(1+\frac{D_k(z)}z\right).
\]
For $t\ge0$, $\log(1+t)\ge t-t^2/2$.  Since $0\le D_k(z)\le1$, for $z\ge1$ this yields
\[
\alpha_k\ge D_k(z)-\frac1{2z}.
\]
Consequently,
\[
\limsup_{z\to\infty}D_k(z)
\le\alpha_k\le\beta_k
\le\inf_{z>0}D_k(z)
\le\liminf_{z\to\infty}D_k(z).
\]
The reverse inequality between liminf and limsup is automatic, so all displayed quantities are equal.  This proves the existence of $\gamma_k$ and~\eqref{eq:gamma-upper-var}.

The lower bound above shows
$z\log(\Lambda_k(z)/z)\le\gamma_k$ for every $z>0$.  As $z\to\infty$,
\[
z\log\frac{\Lambda_k(z)}z
=z\log\left(1+\frac{D_k(z)}z\right)
=D_k(z)+O(z^{-1})\longrightarrow\gamma_k.
\]
This proves the first supremum formula in~\eqref{eq:gamma-lower-var}; the second follows from $u=1/z$ and~\eqref{eq:duality}.  Finally,
\[
\wtLambda_k(u)=u\Lambda_k(1/u)=1+uD_k(1/u),
\]
so~\eqref{eq:derivative-zero} follows on letting $u\downarrow0$.
\end{proof}

It remains to remove the weighted pressure from the answer.  This is where uniform tensor products give an exact finite-block formula rather than only a lower bound.

\begin{proposition}[Uniform representation of cosunflower pressure]\label{prop:uniform-pressure}
For every $z>0$,
\begin{equation}\label{eq:uniform-pressure}
\wtLambda_k(z)
=\sup_{\substack{n\ge1\\0\le r\le n}}
\bigl(M_k(n,r)z^r\bigr)^{1/n}.
\end{equation}
\end{proposition}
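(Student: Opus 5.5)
We prove the two inequalities in \eqref{eq:uniform-pressure} separately, following the same layer-selection and tensor mechanism as in Proposition~\ref{prop:pressure}.

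\emph{Lower bound.} Fix $n\ge1$, $0\le r\le n$, and let $\mathcal U\subseteq\binom{[n]}r$ be a $k$-cosunflower-free family with $|\mathcal U|=M_k(n,r)$. By the cosunflower version of Lemma~\ref{lem:tensor}, the product $\mathcal U^{\boxtimes t}$ on $[tn]$ is $k$-cosunflower-free. It is $tr$-uniform and has $M_k(n,r)^t$ members, so
\[
C_k(tn;z)\ge \bigl(M_k(n,r)z^r\bigr)^t .
\]
Proposition~\ref{prop:pressure} shows that the limit defining $\wtLambda_k(z)$ exists. Passing to it along the subsequence $tn$ gives $\wtLambda_k(z)\ge (M_k(n,r)z^r)^{1/n}$. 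Taking the supremum yields the inequality $\ge$.

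\emph{Upper bound.} Fix $n$ and let $\cG\subseteq2^{[n]}$ attain $C_k(n;z)$. Each uniform layer $\cG\cap\binom{[n]}r$ is itself $k$-cosunflower-free. Its weight is therefore at most $M_k(n,r)z^r$. Summing over the $n+1$ layers,
\[
C_k(n;z)\le (n+1)\max_{0\le r\le n}M_k(n,r)z^r\le (n+1)\,S^n,
\]
where $S$ denotes the supremum on the right of \eqref{eq:uniform-pressure}. Taking $n$th roots and letting $n\to\infty$ gives $\wtLambda_k(z)\le S$, since $(n+1)^{1/n}\to1$.

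The argument is essentially routine, so there is no real obstacle. The only points needing care are the following.
\begin{itemize}
\item The case $r=0$ must be included, since $M_k(n,0)=1$. This is consistent with $\wtLambda_k(z)\ge1$.
\item $S$ is finite: by the trivial bound \eqref{eq:trivial-pressure}, or directly, $M_k(n,r)z^r\le\binom nr z^r\le(1+z)^n$.
\item Lemma~\ref{lem:tensor} must indeed be applied to a \emph{uniform} family. That is precisely why we pass to a single layer rather than tensoring $\cG$ itself.
\end{itemize}
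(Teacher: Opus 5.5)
Your proposal is correct and follows essentially the same route as the paper. The upper bound splits an optimal family into its $n+1$ uniform layers, each of weight at most $M_k(n,r)z^r$; the lower bound applies the cosunflower case of Lemma~\ref{lem:tensor} to an extremal $r$-uniform family and passes to the limit along $tn$, which exists by Proposition~\ref{prop:pressure}.
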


\begin{proof}
Every layer of a $k$-cosunflower-free family is $k$-cosunflower-free.  Thus
\[
C_k(n;z)\le\sum_{r=0}^nM_k(n,r)z^r
\le(n+1)\max_{0\le r\le n}M_k(n,r)z^r.
\]
Taking $n$th roots and passing to the limit gives the upper bound in~\eqref{eq:uniform-pressure}.

Conversely, fix $n,r$ and an extremal family $\cG\subseteq\binom{[n]}r$ of size $M_k(n,r)$.  By Lemma~\ref{lem:tensor}, its $t$-fold block product is $k$-cosunflower-free, is $tr$-uniform, and has weighted mass
$M_k(n,r)^tz^{tr}$.  Hence
\[
C_k(tn;z)\ge M_k(n,r)^tz^{tr}.
\]
Taking $(tn)$th roots and then $t\to\infty$ proves the reverse inequality.
\end{proof}

\begin{proof}[Proof of Theorem~\ref{thm:main}]
The existence of the exponent follows from Theorem~\ref{thm:pressure-formulas}.  Combining~\eqref{eq:gamma-lower-var} and~\eqref{eq:uniform-pressure}, and freely interchanging suprema, gives
\begin{align*}
\gamma_k
&=\sup_{z>0}\sup_{\substack{n\ge1\\0\le r\le n}}
\frac{\log M_k(n,r)+r\log z}{nz}.
\end{align*}
The layer $r=0$ contributes $0$.  For $r\ge1$, elementary calculus gives
\[
\sup_{z>0}\frac{\log M+r\log z}{nz}
=\frac{r}{en}M^{1/r},
\]
the maximum being attained at $z=eM^{-1/r}$.  Substituting $M=M_k(n,r)$ proves~\eqref{eq:finite-block}.
\end{proof}

\section{Consequences and finite verification}

Theorem~\ref{thm:main} immediately turns every finite uniform construction into a number-theoretic lower bound.

\begin{corollary}[Finite transfer principle]\label{cor:finite-transfer}
If $\cG\subseteq\binom{[n]}r$ is $k$-cosunflower-free and $|\cG|=M$, then
\[
f_k(N)\ge(\log N)^{\frac{r}{en}M^{1/r}-o(1)}.
\]
Conversely, for every $\varepsilon>0$ there is a finite triple $(n,r,\cG)$ of this type for which
\[
\frac{r}{en}|\cG|^{1/r}>\gamma_k-\varepsilon.
\]
\end{corollary}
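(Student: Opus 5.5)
The corollary is a direct reading of Theorem~\ref{thm:main}, so the plan is simply to unpack the finite-block formula~\eqref{eq:finite-block} in both directions.

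For the first assertion, let $\cG\subseteq\binom{[n]}r$ be $k$-cosunflower-free with $|\cG|=M$. Then $M\le M_k(n,r)$. Since $\gamma_k$ is the supremum in~\eqref{eq:finite-block}, this gives
\[
\gamma_k\ge\frac{r}{en}M_k(n,r)^{1/r}\ge\frac{r}{en}M^{1/r}.
\]
Because $\gamma_k=\lim\log f_k(N)/\log\log N$ exists, we have $f_k(N)=(\log N)^{\gamma_k+o(1)}$, and the claimed lower bound follows.

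A route that does not depend on the full limit statement is also available. Proposition~\ref{prop:uniform-pressure} gives $\wtLambda_k(z)\ge(Mz^r)^{1/n}$ for every $z>0$. Theorem~\ref{thm:lower} then yields
\[
\liminf_{N\to\infty}\frac{\log f_k(N)}{\log\log N}\ge\frac{\log M+r\log z}{nz}.
\]
Choosing $z=eM^{-1/r}$ evaluates the right-hand side to exactly $\frac{r}{en}M^{1/r}$, which is the calculus step already carried out in the proof of Theorem~\ref{thm:main}. I would mention this route because it shows the bound is effective directly from the blow-up construction. The case $r=0$ cannot occur, since the statement's normalization requires $r\ge1$.

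For the converse, fix $\varepsilon>0$. By the definition of supremum in~\eqref{eq:finite-block}, there exist $n\ge1$ and $1\le r\le n$ with $\frac{r}{en}M_k(n,r)^{1/r}>\gamma_k-\varepsilon$. Take $\cG$ to be an extremal family attaining $M_k(n,r)$; it exists because $\binom{[n]}r$ is finite. Then $|\cG|=M_k(n,r)$, and this $\cG$ is the required triple.

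There is no real obstacle here. The only points needing care are that $\gamma_k>0$ (which ensures the supremum is approached by genuine finite values) and that the supremum is finite, being at most $1$ by Corollary~\ref{cor:capacity} or by~\eqref{eq:trivial-pressure}. Both are already established, so the proof is a two-line consequence of Theorem~\ref{thm:main}.
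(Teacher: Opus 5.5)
Your proposal is correct and is essentially the paper's own argument. The paper gives no separate proof and presents the corollary as an immediate reading of the finite-block formula \eqref{eq:finite-block} together with $f_k(N)=(\log N)^{\gamma_k+o(1)}$ from Theorem~\ref{thm:main}; your alternative route through Proposition~\ref{prop:uniform-pressure} and Theorem~\ref{thm:lower} with $z=eM^{-1/r}$ just retraces the last step of that theorem's proof. Positivity of $\gamma_k$ plays no role in the converse, since any finite supremum of a nonempty set of reals is approached by its elements.
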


Thus lower-bound searches are finite and reproducible.  For fixed $k,n,r$, let a hypergraph have vertex set $\binom{[n]}r$ and one hyperedge for each $k$-tuple forming a cosunflower.  Then $M_k(n,r)$ is its independence number, equivalently the optimum of the finite integer program
\[
\max\sum_{S\in\binom{[n]}r}x_S,
\qquad
\sum_{S\in E}x_S\le k-1\ \text{for every cosunflower }E,
\qquad x_S\in\{0,1\}.
\]
This gives certified lower bounds for $\gamma_k$ from exact integer optimization, while upper bounds require structural control of the same uniform extremal numbers.

\begin{lemma}\label{lem:complete-layer}
If $0\le r\le k-2$, then the complete layer $\binom{[n]}r$ is $k$-cosunflower-free.
\end{lemma}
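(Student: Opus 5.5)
We argue by contradiction, using only the uniformity of the sets. Suppose $S_1,\dots,S_k$ are distinct members of $\binom{[n]}r$ whose pairwise unions all equal a common set $U$. The plan is to show that $U$ is too small to hold $k$ distinct $r$-sets arranged this way. This will force $k\le r+1$, which contradicts $r\le k-2$.

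The first step is a covering observation. For each index $i$, consider the sets $S_j$ with $j\ne i$. For any two of them, say $j\ne l$ (both different from $i$), we have $S_j\cup S_l=U$. So every element of $U$ outside $S_j$ lies in $S_l$.

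The key step is to examine, for each $i$, the set $U\setminus S_i$. Take any $x\in U\setminus S_i$. Since $S_i\cup S_j=U$ for every $j\ne i$, the element $x$ lies in every $S_j$ with $j\ne i$. Hence each $x\in U$ is missing from at most one of the sets $S_1,\dots,S_k$. Moreover, distinctness together with equal cardinality $r$ forces $S_i\ne U$, since otherwise $S_j\subseteq S_i$ would give $S_j=S_i$. So each $U\setminus S_i$ is nonempty, and these sets are pairwise disjoint.

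Now count. Since the sets $U\setminus S_i$ are nonempty and pairwise disjoint, $|U|\ge k$. Then $|U\setminus S_1|=|U|-r$. The set $S_1$ contains every $U\setminus S_j$ with $j\ne1$, which gives $r=|S_1|\ge k-1$. This contradicts $r\le k-2$. (If $r=0$, there is only one set available and $k\ge3$ distinct sets cannot exist.) The only point needing care is the nonemptiness of $U\setminus S_i$, which rests on uniformity. I expect no real obstacle here.
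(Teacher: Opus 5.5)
Your proof is correct and follows essentially the same route as the paper: both set $D_i=U\setminus S_i$ and show that these sets are pairwise disjoint and nonempty (by uniformity and distinctness), then count. The paper writes the count as $kd\le|U|=r+d$ with $d=|U|-r\ge1$; you instead note directly that $S_1\supseteq\bigcup_{j\ne1}D_j$, so $r\ge k-1$, which is the same bound.
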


\begin{proof}
Suppose $S_1,\dots,S_k$ are $r$-sets with a common pairwise union $U$.  Put $D_i=U\setminus S_i$.  The sets $D_i$ are pairwise disjoint: an element in $D_i\cap D_j$ would be absent from $S_i\cup S_j=U$.  They all have the same size $d=|U|-r$.  If the $S_i$ are distinct, then $d\ge1$, and disjointness gives
\[
kd\le|U|=r+d,
\]
so $(k-1)d\le r$.  This is impossible when $r\le k-2$.  Hence no $k$ distinct members of the layer form a cosunflower.
\end{proof}

\begin{proof}[Proof of Corollary~\ref{cor:capacity}]
Taking $r=k-2$ in Lemma~\ref{lem:complete-layer} gives
$M_k(n,r)=\binom nr$.  Theorem~\ref{thm:main} therefore yields
\[
\gamma_k\ge\lim_{n\to\infty}\frac{r}{en}\binom nr^{1/r}
=\frac{r}{e(r!)^{1/r}}.
\]
At $z=1$, equations~\eqref{eq:gamma-upper-var} and~\eqref{eq:gamma-lower-var} give
\[
\log\mu_k^{\mathrm S}
=\log\wtLambda_k(1)
\le\gamma_k
\le\Lambda_k(1)-1
=\mu_k^{\mathrm S}-1.
\]
The trivial inequality $\mu_k^{\mathrm S}\le2$ gives $\gamma_k\le1$, proving~\eqref{eq:capacity-comparison}.

For $k=3$, Deuber, Erd\H{o}s, Gunderson, Kostochka, and Meyer~\cite{DeuberEtAl1997} constructed families giving $\mu_3^{\mathrm S}>1.551$, while Naslund and Sawin~\cite{NaslundSawin2017} proved
$\mu_3^{\mathrm S}\le3/2^{2/3}$.  Hence
\[
\log(1.551)<\gamma_3\le\frac{3}{2^{2/3}}-1,
\]
which is the stated numerical interval.

If a set is LCM-$k$-free, then it is LCM-$(k+1)$-free, so $f_k(N)\le f_{k+1}(N)$ and therefore $\gamma_k\le\gamma_{k+1}$.  Finally, Stirling's formula gives
\[
(r!)^{1/r}=\frac re\exp\left(\frac{\log(2\pi r)}{2r}+O(r^{-2})\right),
\]
which implies~\eqref{eq:large-k}.  Together with $\gamma_k\le1$, this proves $\gamma_k\to1$.
\end{proof}

\begin{remark}
Complementation identifies $M_k(n,r)$ with the largest size of a $(n-r)$-uniform $k$-sunflower-free family on $[n]$.  Consequently, formula~\eqref{eq:finite-block} can be stated entirely in the usual sunflower language.  It also shows why the ordinary capacity $\mu_k^{\mathrm S}=\Lambda_k(1)$ need not by itself determine $\gamma_k$: the latter depends on the full distribution of extremal families across uniform layers.
\end{remark}

\section*{AI Declaration}

The authors acknowledge the use of AI-assisted tools (GPT -5.5 Pro ) during this research for language refinement, exploratory discussion of proof ideas, and critical feedback on draft versions. All AI-generated outputs were reviewed, verified, and revised by the authors, who bear sole responsibility for the scientific content and conclusions of this work.

\end{document}